\newcounter{intro}[section]
\newtheorem{result}[intro]{Result}
\newtheorem{theorem}[equation]{Theorem}
\newtheorem{proposition}[equation]{Proposition}
\theoremstyle{definition}
\newtheorem{remark}[equation]{Remark}
\newcommand{\M}{\overline{\mathcal{M}}}
\newcommand{\ps}{\mathrm{ps}}
\newcommand{\T}{\mathcal{T}}
\newcommand{\bE}{\mathbb{E}}
\newcommand{\cE}{\mathcal{E}}
\newcommand{\G}{\mathcal{G}}
\renewcommand{\O}{\mathcal{O}}
\newcommand{\C}{\mathcal{C}}
\newcommand{\ch}{\mathrm{ch}}
\renewcommand{\L}{\mathbb{L}}
\newcommand{\Q}{\mathbb{Q}}
\newcommand{\N}{\mathbb{N}}
\newcommand{\td}{\mathrm{td}}
\newcommand{\Z}{\mathbb{Z}}
\numberwithin{equation}{section}
\begin{document}

\title{Pseudostable Hodge integrals}
\author[R.~Cavalieri]{Renzo Cavalieri}
\address{Department of Mathematics, Colorado State University}
\email{renzo@math.colostate.edu }

\author[J.~Gallegos]{Joel Gallegos}
\address{Department of Mathematics, University of Minneapolis}
\email{galle268@umn.edu}

\author[D.~Ross]{Dustin Ross}
\address{Department of Mathematics, San Francisco State University}
\email{rossd@sfsu.edu}

\author[B.~Van Over]{Brandon Van Over}
\address{Department of Mathematics, Colorado State University }
\email{bvanover@colostate.edu }

\author[J.~Wise]{Jonathan Wise}
\address{Department of Mathematics,  University of Colorado}
\email{jonathan.wise@math.colorado.edu}

\begin{abstract}
This paper initiates a study of Hodge integrals on moduli spaces of pseudostable curves. We prove an explicit comparison formula that allows one to effectively compute any pseudostable Hodge integral in terms of intersection numbers on moduli spaces of stable curves, and we use this comparison to prove that pseudostable Hodge integrals are equal to their stable counterparts when they are linear in lambda classes, but not when they are nonlinear. This suggests that pseudostable Gromov--Witten invariants are equal to usual Gromov--Witten invariants for target curves, but not for higher-dimensional target varieties.
\end{abstract}

\maketitle

\section{Introduction}

Ever since their introduction by Deligne and Mumford \cite{DeligneMumford}, the moduli spaces of stable curves, denoted $\M_{g,n}$, have played a central role in algebraic geometry, with applications ranging from enumerative geometry to theoretical physics. These moduli spaces support special Chow classes $\lambda_1,\dots,\lambda_g,\psi_1,\dots,\psi_n\in A^*(\M_{g,n})$, and \emph{Hodge integrals} are the intersection numbers associated to any polynomial in these classes. Hodge integrals exhibit a great deal of beautiful structure (see, for example, \cite{FP}), and due to virtual degeneration and localization techniques (\cite{BehrendFantechi,Li,GraberPandharipande}), nearly all concrete computations in Gromov--Witten theory can be reduced to computations of Hodge integrals.

A variation on stable curves, called pseudostable curves, was introduced by Schubert \cite{Schubert}, and has played an important role in the minimal model program for $\M_{g,n}$ (\cite{HH}). Roughly speaking, pseudostability differs from usual stability in that curves are allowed to have cusp singularities but are not allowed to have elliptic tails. Analogous to the setting of stable curves, the moduli spaces of pseudostable curves also support special Chow classes $\lambda_1,\dots,\lambda_g,\psi_1,\dots,\psi_n\in A^*(\M_{g,n}^\ps)$, and pseudostable Hodge integrals are the intersection numbers associated to any polynomial in these classes. These intersection numbers have not previously been studied, and \emph{the purpose of this paper is to initiate a systematic study of pseudostable Hodge integrals, with the aim of uncovering the structure inherent to these integrals and studying the implications of this structure on pseudostable Gromov--Witten theory. }

\subsection{Statement of results}

Our main result provides an effective means by which one can compute any pseudostable Hodge integral in terms of intersection numbers on moduli spaces of stable curves. To set up notation, let
\[
\G^k:\M_{g-k,n+k}\times\M_{1,1}^{\times k}\rightarrow \M_{g,n}
\]
be the natural gluing map that sends $(C_0, C_1, \ldots, C_k)$ to the curve obtained by attaching $C_1, \ldots, C_k$ to $C_0$ at the marked points labelled $n+1, \ldots, n+k$. Let $p_0$ be the projection map to the first factor of the domain. 

\begin{result}[Theorem~\ref{thm:translation}]
For any polynomial $F\in\Q[x_1,\dots,x_g,y_1,\dots,y_n]$, we have
\[
\int_{\M_{g,n}^\ps}F(\lambda_1,\dots,\lambda_g,\psi_1,\dots,\psi_n)=\int_{\M_{g,n}}F(\hat\lambda_1,\dots,\hat\lambda_g,\psi_1,\dots,\psi_n)
\]
where
\[
\hat\lambda_j=\lambda_j+\sum_{i=1}^j\frac{1}{i!}\G_*^{i}(p_0^*(\lambda_{j-i})).
\]	
\end{result}

The integrals appearing in the right-hand side of Result~A can be computed in terms of usual Hodge integrals (see, for example, \cite{Yang}), or they can be evaluated using any of the various computer programs that have been created for computing this type of intersection number on $\M_{g,n}$ (see, for example, \cite{DSvZ}). 

Employing Result~A, our second main result shows that any pseudostable Hodge integral that is linear in $\lambda$ classes is equal to its stable counterpart.

\begin{result}[Proposition~\ref{thm:linearhodge}]
For any $j=1,\dots,g$ and any polynomial $F\in\Z[x_1,\dots,x_n]$,
\[
\int_{\M_{g,n}^\ps}\lambda_jF(\psi_1,\dots,\psi_n)=\int_{\M_{g,n}}\lambda_jF(\psi_1,\dots,\psi_n).
\]
\end{result}

When we first began this project, Result B was entirely unexpected, and we were quite surprised to discover computationally that all of the correction terms from an earlier version of Result~A summed to zero. This computational realization is what led us to the concise formulation of Result~A presented above. In addition to proving Result~B as a consequence of Result~A, we also describe a more conceptual understanding of why it is true (see Remark~\ref{rmk:ELSV}), but this conceptual understanding depends on a number of foundational tools in psuedostable maps that have not yet been developed rigorously.

One might be so optimistic as to hope that \emph{all} pseudostable Hodge integrals are equal to their stable counterparts; however, we prove that this is not the case (Proposition~\ref{thm:mumford}), while simultaneously showing that Mumford's formula, which is a key computational tool in Gromov--Witten theory, fails in the pseudostable setting.

\subsection{Context, motivation, and future work} Throughout the last three decades, a sizeable body of work in mathematics and theoretical physics has centered around the investigation of \emph{curve counting theories} (see \cite{PT} for an introductory overview). These developments have had far-reaching implications, from solutions to classical problems in enumerative geometry to advances in string theory. Given a variety $X$, the basic ingredient required to count curves in $X$ is a suitably well-behaved moduli space that parametrizes ``curves'' in $X$. There are various ways that one might interpret what it means to be a ``curve'' in $X$; each interpretation leads to a different moduli space and, thus, a different curve-counting theory.

One of the earliest-developed curve-counting theories is Gromov--Witten theory, where the relevant moduli space parametrizes stable maps to $X$. In this setting, the curves that one considers are very nice---they have at worst nodal singularities---but the maps that insert these curves into $X$ can be quite ill-behaved---for example, stable maps might contract entire irreducible components of curves. At the other end of the spectrum, Donaldson--Thomas theory studies Hilbert schemes of curves in $X$. In this setting, the curves are much more complicated---they can be highly singular and have nonreduced scheme structure---but the maps that insert them in $X$ are as nice as can be---they are embeddings.

The philosophical motivation for this work is to start with Gromov--Witten theory, then to investigate what happens to the curve counts if we take a small step in the direction of Donaldson--Thomas theory. In other words, starting with stable maps, we ask: What happens to the corresponding curve-counting theory if we allow the curves to be a little more singular while requiring the maps to be a little better behaved? In particular, allowing the curves to have cusp singularities and disallowing the maps to contract elliptic tails leads to the notion of pseudostable maps, generalizing the notion of pseudostable curves.

So where do Hodge integrals come into play? Foundational results in Gromov--Witten theory tell us that spaces of stable maps support virtual fundamental classes \cite{BehrendFantechi} that satisfy (i) a degeneration formula \cite{Li}, which allows many computations to be reduced to toric targets, and (ii) a torus localization formula \cite{GraberPandharipande}, which reduces the Gromov--Witten theory of toric varieties to Hodge integrals. As a result, most of the known computations in Gromov--Witten theory reduce to computations of Hodge integrals. 

The dimension of a target variety bounds the degree of $\lambda$ classes that appear in the corresponding Hodge integrals. As a consequence, \emph{Result~B suggests that pseudostable Gromov--Witten theory is equal to usual Gromov--Witten theory for target curves.} Due to Proposition~\ref{thm:mumford}, we do not expect this equality to hold for targets of higher dimension, and one of the future aims of this work is to explore the relationship between pseudostable Gromov--Witten invariants and usual Gromov--Witten invariants for higher-dimensional targets.

\subsection{Acknowledgements}
	R.~Cavalieri was partially supported by a Simons Collaboration Grant (420720) and an NSF Grant (DMS-2100962). B. Van Over acknowledge the ARCS Foundation for their generous support. D.~Ross was partially supported by the San Francisco State University Presidential Award and by an NSF Grant (DMS-2001439). J.~Wise was supported by a Simons Collaboration Grant (636210) and a Simons Fellowship (822534).

\section{Comparing $\psi$ and $\lambda$ classes}\label{sec:comparingclasses}

In this section, we present a result that computes pullbacks of pseudostable $\psi$ and $\lambda$ classes from $A^*(\M_{g,n}^\ps)$ in terms of stable $\psi$  and $\lambda$ classes in $A^*(\M_{g,n})$. We begin this section with prerequisite material about the moduli spaces of (pseudo)stable curves $\M_{g,n}^{(\ps)}$ and the morphisms
\[
\T:\M_{g,n}\rightarrow\M_{g,n}^\ps.
\]
With the background material established, we then use the morphism $\T$ to prove an explicit relationship comparing the vector bundles $\T^*(\L_i)$ with $\L_i$ and $\T^*(\bE)$ with $\bE$ (Theorem \ref{thm:comparingbundles}), and then we prove an explicit relationship comparing the Chow classes $\T^*(\psi_i)$ with $\psi_i$ and $\T^*(\lambda_j)$ with $\lambda_j$ (Theorem \ref{thm:comparingclasses}).

\subsection{Background on pseudostable curves} 

The moduli spaces of pseudostable curves, denoted $\M_{g,n}^{\ps}$, were first introduced by Schubert \cite{Schubert}, and further developed by Hassett and Hyeon \cite{HH} and Federchuk and Smyth \cite{FS}. In order to describe $\M_{g,n}^\ps$ and its relationship with $\M_{g,n}$---the moduli space of stable curves---we begin by establishing conventions that will be used throughout. 

By a \emph{curve}, we mean a complete, reduced, and connected variety of dimension one. A \emph{node} is a curve singularity that is locally isomorphic to $x^2=y^2$ and a \emph{cusp} is a curve singularity that is locally isomorphic to $x^2=y^3$. The \emph{genus} of a singular curve is always taken to mean the arithmetic genus. Given a curve $C$ with only nodes and cusps as singularities, let 
\[
\eta:\widetilde C\rightarrow C
\]
be the normalization of $C$. By a \emph{normalized component} of $C$, we mean a connected component of $\widetilde C$, each of which is a smooth curve. If $C$ has marked points $p_1,\dots,p_n\in C$, then a \emph{special point} of a normalized component is any point whose image under $\eta$ is a singularity or a marked point in $C$.

We say that a curve $C$ with marked points $p_1,\dots,p_n\in C$ is \emph{stable} if
\begin{enumerate}
\item $C$ has only nodes as singularities,
\item all marked points are distinct and contained in the smooth locus of $C$,
\item every normalized component of genus zero contains at least three special points, and
\item every normalized component of genus one contains at least one special point.
\end{enumerate}
Two marked curves $(C,p_1,\dots,p_n)$ and $(D,q_1,\dots,q_n)$ are said to be \emph{isomorphic} if there is an isomorphism $f:C\rightarrow D$ such that $f(p_i)=q_i$ for all $i$, and the moduli space $\M_{g,n}$ parametrizes flat families of stable curves of genus $g$ with $n$ marked points, up to isomorphism. The study of stable curves traces its origins back to the 1960s, initiated with the foundational work of Deligne and Mumford \cite{DeligneMumford}.  For all values of $(g,n)\in\N^2$ for which $2g-2+n>0$, the moduli space $\M_{g,n}$ is a smooth proper Deligne-Mumford stack of dimension $3g-3+n$. If $(g,n)$ is equal to $(0,0)$, $(0,1)$, $(0,2)$, or $(1,0)$, then the moduli space is empty.

In the 1990s, while studying the GIT construction of the coarse underlying space of $\M_{g,n}$, Schubert introduced an alternative notion of stability \cite{Schubert} called pseudostability. We say that a curve $C$ with marked points $p_1,\dots,p_n\in C$ is \emph{pseudostable} if
\begin{enumerate}
\item $C$ has only nodes \underline{and cusps} as singularities,
\item all marked points are distinct and contained in the smooth locus of $C$,
\item every normalized component of genus zero contains at least three special points, 
\item every normalized component of genus one contains at least \underline{two} special points, and
\item \underline{every normalized component of genus two contains at least one special point}.
\end{enumerate}
For clarity, we have underlined the aspects where pseudostability differs from stability. The moduli space $\M_{g,n}^\ps$ parametrizes flat families of pseudostable curves of genus $g$ with $n$ marked points, up to isomorphism. For all values of $(g,n)$ except $(0,0)$, $(0,1)$, $(0,2)$, $(1,0)$, $(1,1)$, and $(2,0)$, the moduli space $\M_{g,n}^\ps$ is a smooth proper Deligne-Mumford stack of dimension $3g-3+n$.  If $(g,n)$ is equal to $(0,0)$, $(0,1)$, $(0,2)$, $(1,0)$, $(1,1)$, or $(2,0)$ then $\M_{g,n}^\ps$ is empty; otherwise we say that $(g,n)$ are \emph{pseudostable indices}.

Pseudostability differs from usual stability in that it allows cusps while disallowing \emph{elliptic tails}, which are 
irreducible components of genus one that do not contain any marked points and intersect the rest of the curve in a single point. For all pseudostable indices $(g,n)$, there is a morphism
\[
\T:\M_{g,n}\rightarrow\M_{g,n}^\ps.
\]
To describe this morphism on the level of points, consider a stable curve $(C,p_1,\dots,p_n)$ and the corresponding point $[(C,p_1,\dots,p_n)]\in\M_{g,n}$. Write
\[
C=C_0\cup E_1\cup\dots\cup E_r
\]
where $E_1,\dots,E_r$ are the elliptic tails of $C$. Let $(\widehat C,\hat p_1,\dots,\hat p_n)$ be the unique pseudostable curve that admits a morphism $T:C\rightarrow \widehat C$ such that
\begin{enumerate}
\item[(i)] $T$ is an isomorphism when restricted to $C\setminus (E_1\cup\dots\cup E_r)$,
\item[(ii)] $T(E_i)$ is a cusp on $\widehat C$ for all $i=1,\dots,r$, and
\item[(iii)] $T(p_i)=\hat p_i$.
\end{enumerate}
With this notation, we have
\[
\T([(C,p_1,\dots,p_n)])=[(\widehat C,\hat p_1,\dots, \hat p_n)]\in\M_{g,n}^\ps.
\]
Informally, we think of $\T$ as contracting the elliptic tails in $C$ to cusps in $\widehat C$. 

\subsection{$\psi$ and $\lambda$ classes}\label{sec:psilambda}

In order to define the pseudostable $\psi$ and $\lambda$ classes, let us start by recalling the usual definition of these classes for the moduli space of stable curves. Consider the universal curve $\pi:\C\rightarrow\M_{g,n}$, with $n$ sections  $\sigma_1,\dots,\sigma_n:\M_{g,n}\rightarrow \C$, whose images are the marked points. Let $\omega_\pi$ be the relative dualizing sheaf of $\pi$. The \emph{cotangent line bundles} on $\M_{g,n}$ are defined by
\[
\L_i=\sigma_i^*(\omega_\pi);
\]
the fiber of $\L_i$ over a point $[(C,p_1,\dots,p_n)]$ is the cotangent line of $C$ at $p_i$. The $\psi$ classes are the first Chern classes of these line bundles:
\[
\psi_i=c_1(\L_i)\in A^1(\M_{g,n})\;\;\;\text{ for }\;\;\;i=1,\dots,n.
\]
The \emph{Hodge bundle} on $\M_{g,n}$ is defined by
\[
\bE=\pi_*(\omega_\pi);
\]
its fiber over a point $[(C,p_1,\dots,p_n)]$ is the $g$-dimensional vector space of global sections of the dualizing sheaf $\omega_C$. The $\lambda$ classes are the Chern classes of this vector bundle:
\[
\lambda_j=c_j(\bE)\in A^j(\M_{g,n})\;\;\;\text{ for }\;\;\;j=0,\dots,g.
\]

For the moduli spaces of pseudostable curves, the $\lambda$ and $\psi$ classes in $A^*(\M_{g,n}^\ps)$ can be defined in exactly the same way, using instead the universal curve $\pi^{\ps}:\C^\ps\rightarrow\M_{g,n}^\ps$ and the sections $\sigma_1^\ps,\dots,\sigma_n^\ps:\M_{g,n}^\ps\rightarrow\C^\ps$.  We note that the same argument as for $\M_{g,n}$ shows that $\pi_*(\omega_{\pi^{\ps}})$ is a vector bundle over $\M_{g,n}^{\ps}$.%
\footnote{In particular, $R^2 \pi^\ps_*(\mathcal O_{\C^\ps}) = 0$, so $R^1 \pi^\ps_*(\mathcal O_{\C^\ps})$ commutes with base change and $\pi^\ps_*(\mathcal O_{\C^\ps})$ commutes with base change, so $R^1 \pi^\ps_*(\mathcal O_{\C^\ps})$ is a vector bundle.  By Serre duality, $\pi^\ps_*(\omega_{\pi^\ps})$ is also a vector bundle.}
With a goal of understanding how the numerical invariants of moduli spaces of curves depend on the stability condition used to compactify them, we aim to study of the \emph{pseudostable Hodge integrals}:
\[
\int_{\M_{g,n}^\ps}F(\lambda_1,\dots,\lambda_g,\psi_1,\dots,\psi_n)\in\Q.
\]
where $F\in\Q[x_1,\dots,x_g,y_1,\dots,y_n]$ is a polynomial and the integral denotes the pushforward to a point. In order to compute pseudostable Hodge integrals, we will translate the computation  to $\M_{g,n}$, where much is already known about Hodge integrals. To carry out this translation, notice that
\[
\int_{\M_{g,n}^\ps}F(\lambda_1,\dots,\lambda_g,\psi_1,\dots,\psi_n)=\int_{\M_{g,n}}\T^*F(\lambda_1,\dots,\lambda_g,\psi_1,\dots,\psi_n).
\]
Since $\T^*$ is a ring homomorphism, the latter integrand can be written as
\[
F(\T^*(\lambda_1),\dots,\T^*(\lambda_g),\T^*(\psi_1),\dots,\T^*(\psi_n)).
\] 
Since $\T^*(\lambda_j)=c_j(\T^*(\bE))$ and $\T^*(\psi_i)=c_1(\T^*(\L_i))$, we can reduce the problem of computing pseudostable Hodge integrals to understanding the vector bundles $\T^*(\bE)$ and $\T^*(\L_i)$ on $\M_{g,n}$, which is the aim of the next subsection.

\subsection{Comparing vector bundles}

In order to compute the pullbacks $\T^*(\bE)$ and $\T^*(\L_i)$, one might naturally expect the gluing map 
\[
\G:\M_{g-1,n+1}\times \M_{1,1}\rightarrow\M_{g,n},
\]
to come into play, because the image of $\G$ parametrizes curves with at least one elliptic tail, which is exactly where $\T$ fails to be an isomorphism. Let
\[
p_1:\M_{g-1,n+1}\times \M_{1,1}\rightarrow\M_{1,1}
\] 
be the projection onto the second factor. The next result computes $\T^*(\bE)$ and $\T^*(\L_i)$.

\begin{theorem}\label{thm:comparingbundles}
Let $(g,n)$ be  pseudostable indices.
\begin{enumerate}
\item[(i)] There is an isomorphism of line bundles on $\M_{g,n}$:
\[
\T^*(\L_i)=\L_i.
\]
\item[(ii)] There is a short exact sequence of coherent sheaves on $\M_{g,n}$:
\[
0\longrightarrow \T^*(\bE^\vee) \longrightarrow\bE^\vee\longrightarrow \G_*(p_1^*(\bE)^\vee) \longrightarrow 0.
\]
\end{enumerate}
\end{theorem}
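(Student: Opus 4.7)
The approach for both parts is to work with the universal curves and the morphism induced between them. Let $g:=\T^*\pi^\ps : \T^*\C^\ps \to \M_{g,n}$ denote the pullback of the universal pseudostable family, and let $f : \C \to \T^*\C^\ps$ be the morphism over $\M_{g,n}$ whose fiber at a stable curve $[C]$ is the contraction of the elliptic tails of $C$ to cusps. Then $\pi = g \circ f$, and both statements will follow by tracking the relative dualizing sheaves and pushforwards along this factorization.

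For part (i), recall that pseudostability requires every normalized genus-one component to contain at least two special points, so elliptic tails contain no marked points. Hence each section $\sigma_i:\M_{g,n}\to\C$ factors through the open locus where $f$ is an isomorphism, and on that locus the natural map $f^*\omega_g\to\omega_\pi$ is an isomorphism. Since $f\circ\sigma_i$ is the pullback along $\T$ of the $i$-th section of $\pi^\ps$, flat base change identifies $\T^*\L_i$ with $\sigma_i^*(f^*\omega_g)$, which combined with the isomorphism above yields $\T^*\L_i \cong \sigma_i^*\omega_\pi = \L_i$.

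For part (ii), Serre duality gives $\bE^\vee \cong R^1\pi_*\O_\C$, and by base change (as in the footnote) the same identification transports to the pseudostable side as $\T^*\bE^\vee \cong R^1 g_*\O_{\T^*\C^\ps}$. Because $f$ has connected fibers, $f_*\O_\C = \O_{\T^*\C^\ps}$; because the fibers of $f$ and $g$ have dimension at most one, $R^i f_* = 0 = R^i g_*$ for $i\geq 2$. The low-degree exact sequence of the Leray spectral sequence for $\pi = g\circ f$ with coefficients in $\O_\C$ therefore collapses to
\[
0\to R^1 g_*\O_{\T^*\C^\ps}\to R^1\pi_*\O_\C\to g_*(R^1 f_*\O_\C)\to 0,
\]
which is exactly the sequence claimed in (ii) once we identify $g_*(R^1 f_*\O_\C)$ with $\G_*(p_1^*\bE^\vee)$.

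For this final identification, the theorem on formal functions implies that $R^1 f_*\O_\C$ is supported on the universal cusp locus in $\T^*\C^\ps$, and its stalk at a cusp obtained by contracting an elliptic tail $E$ is $H^1(E,\O_E)\cong H^0(E,\omega_E)^\vee$. The gluing map lifts naturally to a morphism $\tilde\G:\M_{g-1,n+1}\times\M_{1,1}\to\T^*\C^\ps$ sending $(C_0, E)$ to the cusp produced by contracting $E$, satisfying $g\circ\tilde\G = \G$; under $\tilde\G$, the sheaf $R^1 f_*\O_\C$ is identified with $p_1^*\bE^\vee$ since both have fiber $H^0(E,\omega_E)^\vee$ at $(C_0, E)$. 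Pushing forward along $g$ then yields $g_*(R^1 f_*\O_\C) = \G_*(p_1^*\bE^\vee)$. The main obstacle is making this last identification rigorous everywhere on $\M_{g,n}$: one must construct $\tilde\G$ carefully, verify the sheaf-level isomorphism globally rather than just on stalks, and correctly account for curves with multiple elliptic tails (where each tail contributes independently to both sides) along with any automorphisms inherent in the gluing.
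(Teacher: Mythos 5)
Your overall strategy coincides with the paper's: factor $\pi$ through the contraction of elliptic tails, run the spectral sequence of the composition on $\O_\C$, and identify the three terms of the resulting low-degree exact sequence. Part (i) matches the paper's argument. In part (ii), however, there are two places where you assert what actually needs proof, and you flag the second one yourself without closing it.

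First, $f_*\O_\C = \O_{\T^*\C^\ps}$ does not follow from connectedness of fibers alone; one needs the target of $f$ to be normal, so that the finite birational piece of the Stein factorization is an isomorphism by Zariski's Main Theorem. The paper checks normality of the pulled-back pseudostable family by Serre's criterion (it is S2 as a flat family of curves over a smooth base, and R1 because its fibers are reduced with smooth generic fiber); without some such input the claim is incomplete. Second --- and this is the genuine gap --- the identification $g_*(R^1f_*\O_\C)\cong\G_*(p_1^*(\bE)^\vee)$ cannot be established stalk-by-stalk via the theorem on formal functions: that theorem computes the \emph{completion} of the stalk as an inverse limit of $H^1$ over infinitesimal thickenings of the fiber, and showing that this limit is just $H^1(E,\O_E)$ already requires a vanishing statement; moreover, as you note, a pointwise computation does not by itself produce the global sheaf isomorphism or the comparison with the pushforward along $\G$. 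The paper's device is the ideal sheaf sequence $0\to\O_\C(-\cE)\to\O_\C\to\O_\cE\to 0$, where $\cE\subset\C$ is the universal locus of elliptic tails: the vanishing $H^1(E,\O_E(q))=0$ (Riemann--Roch on a genus-one curve, $q$ the attaching point) kills $R^1f_*(\O_\C(-\cE))$ and yields a global isomorphism of sheaves $R^1f_*\O_\C\cong R^1f_*\O_\cE$, not merely an identification of fibers. After that, no lift $\tilde\G$ needs to be constructed: $\cE$ is canonically isomorphic to the pullback along $\G$ of the universal curve over $\M_{1,1}$, and since $\G$ and the restriction of $g$ to the cusp locus are finite, the derived pushforwards collapse to give $g_*(R^1f_*\O_\cE)=R^1\pi_*\O_\cE=\G_*(p_1^*(\bE)^\vee)$, which also automatically accounts for curves with several elliptic tails. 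Replacing your formal-functions sketch with this exact-sequence argument closes the gap.
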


%

\begin{proof}[Proof of Theorem \ref{thm:comparingbundles}]
Consider the universal curve $\pi:\C\rightarrow\M_{g,n}$ and let $\widehat\C=\T^*(\C^\ps)$ be the family of pseudostable curves associated to the morphism $\T:\M_{g,n}\rightarrow\M_{g,n}^\ps$. This gives rise to the following commutative diagram, which will be referenced throughout the proof:
\begin{center}
\begin{tikzcd}[column sep=small]
\C \arrow{rr}{\widehat\T}\arrow{dr}{\pi}	&			& \widehat\C\arrow{dl}[above]{\hat\pi\;\;\;}  \\
								& \M_{g,n}		&								
\end{tikzcd}
\end{center}

To prove (i), define $\hat\sigma_i:\M_{g,n}\rightarrow\widehat\C$ by $\hat\sigma_i=\T^\ast \sigma_i$ for all $i$. By definition of the pullback, 
\[
\T^*(\L_i)=\hat\sigma_i^*(\omega_{\hat\pi}).
\]
Let $\cE\subseteq\C$ be the locus of elliptic tails, and notice that $\widehat\T$ is a fiberwise isomorphism on the complement of $\cE$. Since $\hat\sigma_i=\widehat\T\circ\sigma_i$ and the image of $\sigma_i$ is contained in the complement of $\cE$, it follows that 
\[
\hat\sigma_i^*(\omega_{\hat\pi})=\sigma_i^*(\omega_\pi)=\L_i,
\]
which proves (i).

To prove (ii), consider the Grothendieck spectral sequence associated to the composition $\pi_*=\hat\pi_*\circ\widehat\T_*$:
\[
(R^p\hat\pi_*\circ R^q\widehat\T_*)(\O_\C)\Longrightarrow R^{p+q}\pi_*(\O_\C).
\]
Using that $H^2$ vanishes on curves, the exact sequence of low degrees is a short exact sequence:
\begin{equation}\label{eq:ses}
0\longrightarrow R^1\hat\pi_*(\widehat\T_*(\O_\C))\longrightarrow R^1\pi_*(\O_\C)\longrightarrow \hat\pi_*(R^1\widehat\T_*(\O_\C)) \longrightarrow 0.
\end{equation}
We claim that the short exact sequence \eqref{eq:ses} is the same one that appears in assertion (ii) of the theorem. Notice that the middle term is, by Serre duality, $\bE^\vee$. Thus, it remains to identify the first and third terms with $\T^*(\bE^\vee)$ and $\G_*(p_1^*(\bE)^\vee)$, respectively.

Analyzing the first term in \eqref{eq:ses}, we begin with the claim that $\widehat\T_*(\O_\C)=\O_{\widehat\C}$. To justify this, consider the Stein factorization of $\widehat\T:\C\rightarrow\widehat\C$:
\[
\C\stackrel{f}{\longrightarrow}\widetilde\C=\mathrm{Spec}_{\widehat\C}(\widehat\T_*(\O_\C))\stackrel{g}{\longrightarrow}\widehat\C.
\]
	By construction of the Stein factorization, $f_*(\O_\C)=\O_{\widetilde\C}$ and $g$ is finite. In this particular setting, $g$ is also birational and $\widehat\C$ is normal,%
	\footnote{Note that $\widehat\C$ is a flat family of curves over the normal base $\M_{g,n}$, so it is (S2); its generic fiber is smooth and its fibers are reduced so it is (R1).  Therefore it is normal by Serre's criterion.}
	so it follows from Zariski's Main Theorem that $g$ is an isomorphism. Thus, 
\[
\O_{\widehat \C}=g_*(\O_{\widetilde\C})=g_*f_*(\O_\C)=\widehat\T_*(\O_C),
\]
and we conclude that%
	\footnote{Since $R^2 \hat\pi_\ast \mathcal O_{\C^\ps} = 0$ on the fibers, and $\hat\pi_\ast \mathcal O_{\C^\ps} = \mathcal O_{\M^\ps_{g,n}}$ on the fibers, cohomology and base change implies $R^1 \hat\pi_\ast \mathcal O_{\widehat\C}$ is a vector bundle and commutes with base change.} 
\[
R^1\hat\pi_*(\widehat\T_*(\O_\C))=R^1\hat\pi_*(\O_{\widehat\C}).
\]
In addition, using the fact that $\widehat\C$ is the pullback along $\T:\M_{g,n}\rightarrow\M_{g,n}^\ps$ of the universal curve $\pi^{\ps}:\C^{ps}\rightarrow\M_{g,n}^\ps$, we have
\[
R^1\hat\pi_*(\O_{\widehat\C})=\T^*(R^1\pi^\ps_*(\O_{\C^\ps}))=\T^*(\bE^\vee).
\]
Thus, we conclude that the first term \eqref{eq:ses} is $\T^*(\bE^\vee)$, as desired.

Next, we analyze the third term in \eqref{eq:ses}. We begin with the claim that
\begin{equation}\label{eq:ronetails}
R^1\widehat\T_*(\O_\C)=R^1\widehat\T_*(\O_\cE).
\end{equation}
To prove this, consider the short exact sequence
\[
0\longrightarrow\O_\C(-\cE)\longrightarrow\O_\C\longrightarrow \O_\cE\longrightarrow 0.
\]
The last three nonzero terms in the corresponding long exact sequence are
\[
R^1\widehat\T_*(\O_\C(-\cE))\longrightarrow R^1\widehat\T_*(\O_\C)\longrightarrow R^1\widehat\T_*(\O_\cE)\longrightarrow 0.
\]
To prove \eqref{eq:ronetails}, we must show that $R^1\widehat\T_*(\O_\C(-\cE))=0$, and it suffices to prove that the fiber $H^1(\widehat\T^{-1}(x),\O_\C(-\cE)|_{\widehat\T^{-1}(x)})$ vanishes for any $x\in\widehat\C$. If $x$ is not a cusp, then $\widehat\T^{-1}(x)$ is a single point, so $H^1$ vanishes. If $x$ is a cusp, then $\widehat\T^{-1}(x)=E$ is an elliptic tail on some fiber and the cohomology group becomes $H^1(E,\O_\C(-\cE)|_E)=H^1(E,\O_E(q))$ where $q$ is the point where $E$ attaches to the rest of the fiber. The latter group vanishes by the Riemann-Roch Theorem, which concludes the proof of \eqref{eq:ronetails}.

Now consider the commutative diagram
\begin{center}
\begin{tikzcd}
\cE_1 \arrow{r}{\cong}\arrow{d}{\pi_1}		& \cE \arrow{r}{\widehat\T} \arrow{d}{\pi}		& \widehat\cE \arrow{dl}{\hat\pi}\\
\M_{g-1,n+1}\times\M_{1,1}\arrow{r}{\G}		& \M_{g,n}								
\end{tikzcd}
\end{center}
where $\cE_1$ is the pullback of the universal curve from $\M_{1,1}$ and $\widehat\cE\subseteq\widehat\C$ is the schematic image of $\cE \subset \widehat\C$---in other words, it is the locus of cusps in the fibers of $\widehat\C$ over $\M_{g,n}$. Notice that $\cE_1$ is isomorphic to $\cE$, as they both parametrize a point on an elliptic tail of a curve in $\M_{g,n}$. By Serre duality, 
\[
p_1^*(\bE)^\vee=R^1{\pi_1}_*(\O_{\cE_1}),
\]
and since $\G$ is finite, we have that
\[
\G_*(p_1^*(\bE)^\vee)=\G_*R^1{\pi_1}_*(\O_{\cE_1})=R^1(\G\circ\pi_1)_*(\O_{\cE_1})=R^1\pi_*(\O_\cE).
\]
The second equality is because $\G_\ast$ is exact (since $\G$ is finite). Similarly, since $\widehat\pi:\hat\cE\rightarrow\M_{g,n}$ is finite, we have
\[
R^1\pi_*(\O_\cE)=R^1(\hat\pi\circ\widehat\T)_*(\O_\cE)=\hat\pi_*(R^1\widehat\T_*(\O_\cE)),
\]
which, by \eqref{eq:ronetails}, is equal to the third term of \eqref{eq:ses}, completing the proof of (ii).
\end{proof}

\subsection{Comparing Chern classes}

Now that we have an understanding of the vector bundles $\T^*(\bE)$ and $\T^*(\L_i)$, we can compute their Chern classes. In this section, we prove an explicit formula for the pullbacks of pseudostable $\psi$ and $\lambda$ classes.

In order to state the formula, let
\[
\G^k:\M_{g-k,n+k}\times\M_{1,1}^{\times k}\rightarrow \M_{g,n}
\]
be the gluing map onto the locus of $k$ elliptic tails and let $p_0$ and $p_1$ denote the projection maps onto the first two factors of the domain. Notice that, when $k=1$, we recover $\G^1=\G$, and our notation is consistent with the map $p_1$ used in the previous section. The next result computes explicit formulas for $\T^*(\psi_i)$ and $\T^*(\lambda_j)$ in terms of these maps.

\begin{theorem}\label{thm:comparingclasses}
For all pseudostable indices $(g,n)$, we have
\[
\T^*(\psi_i)=\psi_i\;\;\;\text{ for all }\;\;\;i=1,\dots,n
\]
and
\[
\T^*(\lambda_j)=\lambda_j+\sum_{i=1}^j\frac{1}{i!}\G_*^{i}(p_0^*(\lambda_{j-i})).
\]
\end{theorem}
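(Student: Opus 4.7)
The $\psi$-class identity is immediate: taking first Chern classes in Theorem~\ref{thm:comparingbundles}(i) gives $\T^*(\psi_i)=c_1(\T^*\L_i)=c_1(\L_i)=\psi_i$.

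For the $\lambda$-class identity, my first move is to dualize the short exact sequence in Theorem~\ref{thm:comparingbundles}(ii). Since $\T^*\bE^\vee$ and $\bE^\vee$ are locally free and $\G$ is l.c.i.\ of codimension~$1$, Grothendieck duality gives $\mathcal{E}xt^1(\G_*\mathcal{F},\O_{\M_{g,n}})=\G_*(\mathcal{F}^\vee\otimes N_\G)$ for any locally free $\mathcal{F}$ on the source, while higher $\mathcal{E}xt$'s vanish since $\T^*\bE^\vee$ has projective dimension $\leq 1$. The normal bundle is $N_\G=p_0^*(\L_{n+1}^\vee)\otimes p_1^*(\bE^\vee)$ (the node-smoothing direction is the tensor of the two tangent lines, and $\bE=\L_1$ on $\M_{1,1}$), so setting $\mathcal{F}=p_1^*\bE^\vee$ the tensor product $\mathcal{F}^\vee\otimes N_\G$ collapses to $p_0^*(\L_{n+1}^\vee)$. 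The dualized sequence is thus
\[
0\to\bE\to\T^*\bE\to Q\to 0,\qquad Q:=\G_*(p_0^*\L_{n+1}^\vee),
\]
and Whitney's formula for coherent sheaves on the smooth stack $\M_{g,n}$ yields $c(\T^*\bE)=c(\bE)\cdot c(Q)$. The task reduces to computing $c(Q)$.

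To access $c(Q)$, I apply Grothendieck--Riemann--Roch: writing $a=-p_0^*\psi_{n+1}$ and $b=-p_1^*\lambda_1$, so that $c_1(N_\G)=a+b$ and $c_1(p_0^*\L_{n+1}^\vee)=a$,
\[
\ch(Q)=\G_*\!\left(e^a\cdot\frac{1-e^{-(a+b)}}{a+b}\right).
\]
The decisive simplification is that $\lambda_1^2=0$ on $\M_{1,1}$, forcing $b^2=0$; this collapses the bracketed expression to $\alpha(a)+b\,\beta(a)$ for explicit power series $\alpha,\beta$, so that each $\ch_k(Q)$ is a $\G_*$-pushforward of a polynomial in $a$ and $ab$. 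One then converts $\ch(Q)$ into $c(Q)$ via Newton's identities, which requires knowing all the self-intersections $[\delta_{\mathrm{ell}}]^k=c_1(Q)^k$.

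The main obstacle is precisely this last step: $\G$ is \emph{not} a closed immersion. It is generically injective onto $\delta_{\mathrm{ell}}$, but $k!$-to-$1$ on the $k$-tail locus, so the naive formula $[\delta_{\mathrm{ell}}]^k=\G_*(c_1(N_\G)^{k-1})$ fails. The excess is carried by the sub-boundary $\delta_1^Y\subset\M_{g-1,n+1}\times\M_{1,1}$ parametrizing configurations whose genus $g-1$ factor already has an elliptic tail; this yields the corrected identity $\G^*\G_*(1)=c_1(N_\G)+[\delta_1^Y]$ on $Y$, whose iterated pushforward expresses $[\delta_{\mathrm{ell}}]^k$ as a sum of contributions $\G^i_*(\cdot)$ for $i\leq k$. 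Combining this with the decomposition $(\G^i)^*\bE=p_0^*\bE\oplus\bigoplus_{k=1}^i p_k^*\bE$, the projection formula, and $\lambda_1^2=0$ on each $\M_{1,1}$ factor, the contributions reorganize by stratum: the $i$-tail stratum contributes $[\delta_i]=\tfrac{1}{i!}\G^i_*(1)$, with the $\tfrac{1}{i!}$ reflecting exactly the $S_i$-symmetry of the ordered gluing $\G^i$. The hard part is the combinatorial bookkeeping that shows all the mixed terms (between naive Whitney, the excess corrections, and the Newton conversion) telescope into the clean formula of the theorem.
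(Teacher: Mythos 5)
Your setup is sound and runs parallel to the paper's, with one genuinely different twist: instead of taking Chern characters of the exact sequence in Theorem~\ref{thm:comparingbundles}(ii) and converting to Chern classes with Bell polynomials (as the paper does), you dualize the sequence via Grothendieck duality for the codimension-one l.c.i.\ map $\G$ to obtain a multiplicative Whitney relation $c(\T^*\bE)=c(\bE)\,c(Q)$ with $Q=\G_*(p_0^*\L_{n+1}^\vee)$. That step is correct (one can check $\ch_j(Q)=(-1)^{j+1}\ch_j(\G_*(p_1^*(\bE)^\vee))$, consistent with the paper's Grothendieck--Riemann--Roch computation), and you correctly identify the two remaining technical ingredients: the excess-intersection relation $\G^*\G_*(1)=c_1(N_\G)+[\delta_1^Y]$ needed to multiply boundary pushforwards, and the splitting $(\G^i)^*\bE=p_0^*\bE\oplus\bigoplus_k p_k^*\bE$.

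The gap is that the entire combinatorial verification --- which is the actual content of the theorem --- is asserted rather than performed. Your reduction does not make the answer visible: if one naively had $c_i(Q)=\frac{1}{i!}\G^i_*(1)$, Whitney plus the projection formula would give $\T^*(\lambda_j)=\sum_i\frac{1}{i!}\G^i_*((\G^i)^*\lambda_{j-i})$, and $(\G^i)^*(\lambda_{j-i})$ differs from $p_0^*(\lambda_{j-i})$ by the cross terms $\sum_k p_k^*(\lambda_1)p_0^*(\lambda_{j-i-1})$ coming from the Hodge-bundle splitting; so $c_i(Q)$ must itself carry compensating $\psi$-class corrections, and one has to prove that all of these cancel against the excess-intersection terms generated by Newton's identities. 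That telescoping is precisely the long calculation occupying most of the paper's proof (the three types of products and the verification of the recursion \eqref{eq:recursion2}), and your proposal stops exactly where it begins. Until you either carry out that bookkeeping or produce a closed form for $c_i(Q)$ and multiply it out against $c(\bE)$, the stated formula for $\T^*(\lambda_j)$ is not established.
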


\begin{proof}

The first identity is immediate from Theorem \ref{thm:comparingbundles}(i). The proof of the second identity is more involved. To investigate the Chern classes $\T^*(\lambda_j)=c_j(\T^*(\bE))$, we begin by studying the Chern characters $\ch_j(\T^*(\bE))$. Applying Chern characters to the short exact sequence in Theorem~\ref{thm:comparingbundles}(ii), we obtain
\begin{equation}\label{eq:cherncharacters}
\ch(\T^*(\bE^\vee))=\ch(\bE^\vee)-\ch(\G_*(p_1^*(\bE)^\vee)).
\end{equation}
Using that $\ch_j(V^\vee)=(-1)^j\ch_j(V)$, the relation \eqref{eq:cherncharacters} implies that
\begin{equation}\label{eq:cherncharacter1}
\ch_j(\T^*(\bE))=\ch_j(\bE)-(-1)^j\ch_j(\G_*(p_1^*(\bE)^\vee)),
\end{equation}
We now aim to compute the final term $\ch(\G_*(p_1^*(\bE)^\vee))$. To do so, we apply the Grothendieck--Riemann--Roch Theorem:
\[
\ch(\G_*(p_1^*(\bE)^\vee))=\G_*(\ch(p_1^*(\bE)^\vee)\td(T_\G)),
\]
where $\td(T_\G)$ is the Todd class of the relative tangent sheaf of $\G$. 

To compute $\td(T_\G)$, we note that the relative tangent sheaf is the $K$-theoretic additive inverse of the pullback of the normal bundle of the image of $\G:\M_{g-1,n+1}\times\M_{1,1}\rightarrow\M_{g,n}$, which is
\[
p_0^*(\L_{n+1}^\vee)\otimes p_1^*(\L_1^\vee),
\]
Thus, by the multiplicativity of the Todd class, we compute
\[
\td(T_\G)=\td\big(p_0^*(\L_{n+1}^\vee)\otimes p_1^*(\L_1^\vee)\big)^{-1}=\frac{e^{\psi_\bullet+\psi_\star}-1}{\psi_\bullet+\psi_\star}, 
\]
where, for simplicity, we define
\[
\psi_\star=p_0^*(\psi_{n+1})\;\;\;\text{ and }\;\;\;\psi_\bullet=p_1^*(\psi_1).
\]

To compute $\ch(p_1^*(\bE)^\vee)$, we note that, on the one-dimensional moduli space $\M_{1,1}$, there is a natural isomorphism of line bundles $\bE=\L_1$, so
\[
\ch(\bE)=e^{\lambda_1}=1+\lambda_1=1+\psi_1.
\]
Thus, 
\[
\ch(p_1^*(\bE)^\vee)=p_1^*(\ch(\bE^\vee))=p_1^*(1-\psi_1)=1-\psi_\bullet.
\]

Repeatedly using the fact that $\psi_\bullet^2=0$, we simplify as follows:
\begin{align*}
\ch(p_1^*(\bE)^\vee)\td(T_\G)&=(1-\psi_\bullet)\frac{e^{\psi_\bullet+\psi_\star}-1}{\psi_\bullet+\psi_\star}\\
&=(1-\psi_\bullet)\sum_{k\geq 0}\frac{(\psi_\bullet+\psi_\star)^k}{(k+1)!}\\
&=(1-\psi_\bullet)\left(\sum_{k\geq 0}\frac{\psi_\star^k+k\psi_\bullet\psi_\star^{k-1}}{(k+1)!} \right)\\
&=\sum_{k\geq 0}\frac{\psi_\star^k}{(k+1)!}-\sum_{k\geq 0}\frac{\psi_\bullet\psi_\star^k}{(k+1)!}+\sum_{k\geq 0}\frac{k\psi_\bullet\psi_\star^{k-1}}{(k+1)!}\\
&=\sum_{k\geq 0}\frac{\psi_\star^k}{(k+1)!} - \sum_{k\geq 0} \frac{\psi_\bullet \psi_\star^{k-1}}{k!} + \sum_{k\geq 0} \frac{k \psi_\bullet \psi_\star^{k-1}}{(k+1)!} \\
&=\sum_{k\geq 0}\frac{\psi_\star^k-\psi_\bullet\psi_\star^{k-1}}{(k+1)!}.
\end{align*}
Starting with the third line, we have used the convention that any $\psi$ class to a negative power is equal to zero.

Using that $\G$ has relative dimension $-1$, we see that
\[
\ch_j(\G_*(p_1^*(\bE)^\vee))=\G_*\left(\frac{\psi_\star^{j-1}-\psi_\bullet\psi_\star^{j-2}}{j!}\right).
\]
Putting this formula for $\ch_j(\G_*(p_1^*(\bE)^\vee))$ back into Equation~\eqref{eq:cherncharacter1}, we conclude that
\begin{equation}\label{eq:cherncharactercomparison}
\ch_j(\T^*(\bE))=\ch_j(\bE)-\frac{(-1)^j}{j!}\G_*\left(\psi_\star^{j-1}-\psi_\bullet\psi_\star^{j-2}\right).
\end{equation}

In order to translate the comparison in Equation~\eqref{eq:cherncharactercomparison} to a statement about Chern classes, instead of Chern characters, we utilize Bell polynomials (see \cite[Section 3.3]{Comtet}, for an introduction to Bell polynomials). The Bell polynomials $B_n(x)\in\Z[x_1,\dots,x_n]$ can be defined by the series expansion
\[
\exp\bigg(\sum_{j=0}^\infty t^j\frac{x^j}{j!}\bigg)=\sum_{n=0}^\infty B_n(x)\frac{t^n}{n!}.
\]
They satisfy a number of useful properties; we list the two that are most relevant.
\begin{enumerate}
\item[(B1)] If $e_i$ is the degree-$i$ elementary symmetric polynomial in some set of variables and $p_i$ is the degree-$i$ power sum polynomial in the same variables, then 
\[
e_j=\frac{1}{j!}B_j(0!p_1,\;-1!p_2,\;2!p_3,-3!p_4,\;\dots,\;(-1)^{j-1}(j-1)!p_j).
\]
\item[(B2)] Bell polynomials are determined recursively: $B_0(x)=1$ and
\[
B_{k+1}(x)=\sum_{j=0}^{k}{k\choose j}x_{j+1}B_{k-j}(x),
\]
\end{enumerate}

Notice that (B1) gives us a way to represent Chern classes, which are elementary symmetric functions in the Chern roots, in terms of Chern characters, which (up to scalar factor) are power sum functions in the Chern roots. To make this precise in our setting, let $\rho_1,\dots,\rho_g$ denote the Chern roots of $\bE$. Then
\[
\lambda_j=e_j(\rho_1,\dots,\rho_g)\;\;\;\text{ and }\;\;\;\ch_j(\bE)=\frac{1}{j!}p_j(\rho_1,\dots,\rho_g).
\]
Thus, if we define
\[
x=(x_\ell)_{\ell=1}^g=\big((-1)^{\ell-1}(\ell-1)!\ell!\ch_\ell(\bE)\big)_{\ell=1}^g, 
\]
it follows from (B1) that
\[
\lambda_j=\frac{1}{j!}B_j(x).
\]
Defining another sequence of variables by 
\[
\displaystyle y=\big((\ell-1)!\G_*(\psi_\star^{\ell-1}-\psi_\bullet\psi_\star^{\ell-2})\big)_{\ell=1}^g, 
\]
the Chern character comparison of Equation~\eqref{eq:cherncharactercomparison} implies that
\[
x+y=((-1)^{\ell-1}(\ell-1)!\ell!\ch_\ell(\T^*(\bE)))_{\ell=1}^g,
\]
from which it follows that
\[
\T^*(\lambda_j)=\frac{1}{j!}B_j(x+y).
\]
Thus, the formula for $\T^*(\lambda_j)$ in the statement of the theorem is equivalent to 
\begin{equation}\label{eq:recursionprelim}
B_j(x+y)=j!\bigg(\lambda_j+\sum_{i=1}^j\frac{\G_*^{i}(p_0^*(\lambda_{j-i}))}{i!}\bigg).
\end{equation}

We can finish the proof of the theorem by showing that the right-hand side in \eqref{eq:recursionprelim} satisfies the Bell polynomial recursion (B2). Carefully translating this recursion, it remains to prove the following identity for all $k\geq 0$:
\begin{equation}\label{eq:recursion1}
(k+1)\bigg(\lambda_{k+1}+\sum_{i=1}^{k+1}\frac{\G_*^{i}(p_0^*(\lambda_{k+1-i}))}{i!}\bigg)=\sum_{j=0}^k\frac{x_{j+1}+y_{j+1}}{j!}\bigg(\lambda_{k-j}+\sum_{i=1}^{k-j}\frac{\G_*^{i}(p_0^*(\lambda_{k-j-i}))}{i!}\bigg).
\end{equation}
Using the Bell polynomial recursion (B2), it follows that
\begin{equation}\label{eq:brforlambda}
(k+1)\lambda_{k+1}=\sum_{j=0}^k\frac{x_{j+1}}{j!}\lambda_{k-j}.
\end{equation}
Therefore, we can reduce \eqref{eq:recursion1} to the following equation
\begin{equation}\label{eq:recursion2}
(k+1)\sum_{i=1}^{k+1}\frac{\G_*^{i}(p_0^*(\lambda_{k+1-i}))}{i!}=\sum_{j=0}^k\frac{1}{j!}\bigg(y_{j+1}\lambda_{k-j}+(x_{j+1}+y_{j+1})\sum_{i=1}^{k-j}\frac{\G_*^{i}(p_0^*(\lambda_{k-j-i}))}{i!}\bigg).
\end{equation}
We now compute the three types of products appearing in the right-hand side of \eqref{eq:recursion2}:
\begin{enumerate}
\item $y_{j+1}\lambda_{k-j}$,
\item $x_{j+1}\G_*^{i}(p_0^*(\lambda_{k-j-i}))$, and
\item $y_{j+1}\G_*^{i}(p_0^*(\lambda_{k-j-i}))$.
\end{enumerate}

To compute the Type~(1) terms, we start by unpacking the definitions:
\[
y_{j+1}\lambda_{k-j}=j!\G_*(\psi_\star^j-\psi_\bullet\psi_\star^{j-1})\lambda_{k-j}.
\]
Noting that the Hodge bundle splits on the boundary:
\[
\G^*(\bE)=p_0^*(\bE)\oplus p_1^*(\bE),
\]
the projection formula then implies that
\begin{align*}
y_{j+1}\lambda_{k-j}&=j!\G_*\big((\psi_\star^j-\psi_\bullet\psi_\star^{j-1})(p_0^*(\lambda_{k-j})+p_1^*(\lambda_1)p_0^*(\lambda_{k-j-1})\big)\\
&=j!\G_*\big(\psi_\star^jp_0^*(\lambda_{k-j})-\psi_\bullet\psi_\star^{j-1}p_0^*(\lambda_{k-j})+\psi_\bullet\psi_\star^jp_0^*(\lambda_{k-j-1})\big),
\end{align*}
where the second equality uses the facts that $p_1^*(\lambda_1)=\psi_\bullet$ and $\psi_\bullet^2=0$. We also note the convention that $\lambda_{-1}=0$. Adding over all $j$, and canceling the telescoping summands, we see that the contribution of the terms of Type~(1) to the right-hand side of \eqref{eq:recursion2} is equal to
\begin{equation}\label{eq:recursiontype1}
\sum_{j=0}^k\frac{y_{j+1}\lambda_{k-j}}{j!}=\sum_{j=0}^k\G_*\big(\psi_\star^jp_0^*(\lambda_{k-j})\big).
\end{equation}

To compute the terms of Type (2), we begin by unpacking the definitions:
\[
x_{j+1}\G_*^{i}(p_0^*(\lambda_{k-j-i}))=(-1)^{j}j!(j+1)!\ch_{j+1}(\bE)\G_*^{i}(p_0^*(\lambda_{k-j-i})).
\]
To write a formula for these terms, we generalize the definition of $\psi_\bullet$ from $i=1$ to $i\geq 1$ using the same definition $\psi_\bullet=p_1^*(\psi_1)=p_1^*(\lambda_1)$. It then follows from the projection formula, the splitting of the Hodge bundle, and the vanishing of $p_\ell^*(\lambda_{j+1})$ for $j > 0$, that
\[
	\begin{aligned}
		x_{j+1}\G_*^{i}(p_0^*(\lambda_{k-j-i}))&=
(-1)^j j! (j+1)! \G_*^i\Bigl(p_0^*(\lambda_{k-j-i})\sum_{\ell=1}^i p_\ell^\ast(\lambda_{j+1})\Bigr) 
		\\ &=
\begin{cases}
i\G_*^i(\psi_\bullet p_0^*(\lambda_{k-i}))+\G_*^i(p_0^*(\ch_{1}(\bE)\lambda_{k-i})) & j=0,\\
(-1)^jj!(j+1)!\G_*^i(p_0^*(\ch_{j+1}(\bE)\lambda_{k-j-i})) & j >  0.
\end{cases}
	\end{aligned}
\]
Note that $\G_*^i(p_\ell^*(\lambda_1)p_0^*(\lambda_{k-i}))$ all coincide, so we have (slightly abusively) written $\G_*^i(\psi_\bullet p_0^*(\lambda_{k-i})$ for their common value.  If we add these contributions over $i$ and $j$ and use the fact that
\[
\sum_{j=0}^{k-i}(-1)^j(j+1)!\ch_{j+1}(\bE)\lambda_{k-i-j}=(k-i+1)\lambda_{k-i+1},
\]
which is equivalent to \eqref{eq:brforlambda}, we see that the contribution of the terms of Type~(2) to the right-hand side of \eqref{eq:recursion2} is equal to
\begin{align}
	\notag& \sum_{j=0}^k\sum_{i=1}^{k-j}\frac{x_{j+1}\G_*^{i}(p_0^*(\lambda_{k-j-i}))}{j!i!}
	\\ \notag & \qquad =\sum_{i=1}^k\frac{\G_*^i(\psi_\bullet p_0^*(\lambda_{k-i}))}{(i-1)!}+\sum_{i=1}^k \frac{1}{i!} \sum_{j=0}^{k-i} (-1)^j (j+1)! \G_*^i(p_0^*(\ch_{j+1}(\bE)\lambda_{k-j-i})
	\\ \notag & \qquad =\sum_{i=1}^k\frac{\G_*^i(\psi_\bullet p_0^*(\lambda_{k-i}))}{(i-1)!}+\sum_{i=1}^{k}\frac{k-i+1}{i!}\G_*^i(p_0^*(\lambda_{k-i+1}))
	\\ \label{eq:recursiontype2} & \qquad =\sum_{i=1}^k\frac{\G_*^i(\psi_\bullet p_0^*(\lambda_{k-i}))}{(i-1)!}+(k+1) \sum_{i=1}^{k}\frac{1}{i!}\G_*^i(p_0^*(\lambda_{k-i+1}))-\sum_{i=0}^{k-1}\frac{1}{i!} \G_*^{i+1}(p_0^*(\lambda_{k-i}))
\end{align}
In the last line, we have rewritten the expression to make eventual cancellation more obvious.  

Lastly, to compute the terms of Type~(3), we start by unpacking definitions:
\[
y_{j+1}\G_*^{i}(p_0^*(\lambda_{k-j-i}))=j!\G_*(\psi_\star^{j}-\psi_\bullet\psi_\star^{j-1})\G_*^{i}(p_0^*(\lambda_{k-j-i})).
\]
To compute the product of the pushforward classes, we can use Graber and Pandharipande's formula for the intersection of boundary strata \cite[Appendix~A.4, Equation~(11)]{GP} (see also \cite[Proposition~1]{Yang}).  Here we are intersecting the boundary stratum $\G^i$ with $\G = \G^1$.  The intersection has $i$ non-transverse copies of $\G^i$, each of which contributes weighted by the first Chern class of the excess bundle, which is $-\psi_\bullet-\psi_\star$.  The intersection also has $1$ transverse copy of $\G^{i+1}$.  On this copy, the $p_0^*(\lambda_{k-j-i})$ factor is pulled back from $\M_{g-1,n+1} \times \M_{1,1}$ and therefore factors into $p_1^*(\lambda_1) p_0^*(\lambda_{k-j-i-1}) + p_0^*(\lambda_{k-j-i})$, where we are abusing $p_0$ and asking it now to stand for the projection to $\M_{g-1,n}$, and $\lambda_{-1}=0$ by convention. We have
\begin{align*}
	\frac{1}{j!}y_{j+1}\G_*^{i}(p_0^*(\lambda_{k-j-i}))&=i\G_*^i\big((\psi_\star^{j}-\psi_\bullet\psi_\star^{j-1})(-\psi_\bullet-\psi_\star)p_0^*(\lambda_{k-j-i})\big)\\
	&\qquad+\G^{i+1}_*\big((\psi_\star^{j}-\psi_\bullet\psi_\star^{j-1})(p_1^*(\lambda_1)p_0^*(\lambda_{k-j-i-1})+p_0^*(\lambda_{k-j-i}))\big)\\
	&= \Bigl\{ - i\G^i_*(\psi_\bullet p_0^*(\lambda_{k-i})) \quad \text{if $j=0$} \Bigr\}
		\\ & \qquad - i \G_*^i(\psi_\star^{j+1} p_0^*(\lambda_{k-j-i})) + \G_*^{i+1}(\psi_\star^j p_0^*(\lambda_{k-j-i})) 
		\\ & \qquad + \G_*^{i+1}(\psi_\bullet\psi_\star^jp_0(\lambda_{k-j-i-1})) - \G_*^{i+1}(\psi_\bullet\psi_\star^{j-1}p_0^*(\lambda_{k-j-i})).
\end{align*}
We have simplified using $p_1^*(\lambda_1)=\psi_\bullet$ and $\psi_\bullet^2=0$ (so $(\psi_\star^j-\psi_\bullet\psi_\star^{j-1})(-\psi_\bullet-\psi_\star)=-\psi_\star^{j+1}$ for $j>0$ and $(\psi_\star^j-\psi_\bullet\psi_\star^{j-1})p_1^*(\lambda_1) = \psi_\star^j\psi_\bullet$).  

When we sum over $j$, the terms in the bottom line will cancel.  When we sum over all $i$ and $j$, we therefore have
\begin{multline*}
	\sum_{\substack{i\geq1 \\ j\geq 0 \\ i+j\leq k}} \frac{1}{i!j!} y_{j+1}\G^i_*(p_0^*(\lambda_{k-j-i})) 
 = - \sum_{i=1}^k \frac{1}{(i-1)!} \G^i_*(\psi_\bullet p_0^*(\lambda_{k-i})) \\
	+ \sum_{\substack{i\geq1 \\ j\geq 0 \\ i+j\leq k}} \Bigl( - \frac{1}{(i-1)!} \G_*^i(\psi_\star^{j+1}p_0^*(\lambda_{k-j-i})) + \frac{1}{i!} \G_*^{i+1}(\psi^j_\star p_0^*(\lambda_{k-j-i})) \Bigr)
\end{multline*}
The last two terms telescope to leave only the first term with $i= 1$ and the last with $j= 0$):
\begin{align}
	 \notag& \sum_{j=0}^k\sum_{i=1}^{k-j}\frac{y_{j+1}\G_*^{i}(p_0^*(\lambda_{k-j-i}))}{j!i!} 
	\\ \notag & \qquad =-\sum_{i=1}^k\frac{\G_*^i(\psi_\bullet p_0^*(\lambda_{k-i}))}{(i-1)!}
		-\sum_{j=0}^{k-1}\G_*\big(\psi_\star^{j+1}p_0^*(\lambda_{k-j-1})\big)
		+\sum_{i=1}^{k}\frac{\G^{i+1}_*\big(p_0^*(\lambda_{k-i})\big)}{i!}
	\\\label{eq:recursiontype3}  & \qquad=-\sum_{i=1}^k\frac{\G_*^i(\psi_\bullet p_0^*(\lambda_{k-i}))}{(i-1)!}
	-\sum_{j=1}^{k}\G_*\big(\psi_\star^{j}p_0^*(\lambda_{k-j})\big)
	+\sum_{i=1}^{k-1}\frac{\G^{i+1}_*\big(p_0^*(\lambda_{k-i})\big)}{i!} 
	\\ \notag
	& \omit\hfill $\displaystyle + (k+1) \frac{\G^{k+1}(p_0^*(\lambda_{k-i}))}{(k+1)!}$
\end{align}

Again, we have rewritten the formula in the last line to make eventual cancellation clearer.  Now we add the contributions of terms of Types (1), (2), and (3), which are recorded in the right-hand sides of Equations \eqref{eq:recursiontype1}, \eqref{eq:recursiontype2}, and \eqref{eq:recursiontype3}, to get the right-hand side of Equation~\eqref{eq:recursion2}.  The first term of~\eqref{eq:recursiontype3} cancels with the first term of~\eqref{eq:recursiontype2}; the second and third terms of~\eqref{eq:recursiontype3} cancel with~\eqref{eq:recursiontype1} and the third term of~\eqref{eq:recursiontype2}; the second term of~\eqref{eq:recursiontype2} and the last term of~\eqref{eq:recursiontype3} remain and combine to give the left-hand side of~\eqref{eq:recursion2}, which finishes the proof of the theorem.
\end{proof}

\section{Computing pseudostable Hodge integrals}

It follows from Theorem \ref{thm:comparingclasses} and the discussion in Subsection~\ref{sec:psilambda} that pseudostable Hodge integrals can be translated to intersection numbers on moduli spaces of stable curves. For ease of reference, we now state this result precisely.

\begin{theorem}\label{thm:translation}
For any psuedostable indices $(g,n)$ and polynomial $F\in\Q[x_1,\dots,x_g,y_1,\dots,y_n]$, we have
\[
\int_{\M_{g,n}^\ps}F(\lambda_1,\dots,\lambda_g,\psi_1,\dots,\psi_n)=\int_{\M_{g,n}}F(\hat\lambda_1,\dots,\hat\lambda_g,\psi_1,\dots,\psi_n)
\]
where
\[
\hat\lambda_j=\lambda_j+\sum_{i=1}^j\frac{1}{i!}\G^i_*\big(p_0^*(\lambda_{j-i})\big).
\]
\end{theorem}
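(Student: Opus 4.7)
The theorem is a direct packaging of Theorem~\ref{thm:comparingclasses} together with the general fact that a birational proper morphism between smooth Deligne--Mumford stacks of the same dimension pushes the fundamental class to the fundamental class. The plan has three ingredients, none of which should present a serious obstacle since the heavy lifting was done in the previous section.

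First, I would record that $\T:\M_{g,n}\rightarrow\M_{g,n}^\ps$ is a proper birational morphism of smooth proper DM stacks of the same dimension $3g-3+n$. In particular $\T_*([\M_{g,n}])=[\M_{g,n}^\ps]$, and the projection formula gives the change-of-variables identity
\[
\int_{\M_{g,n}^\ps}\alpha=\int_{\M_{g,n}}\T^*(\alpha)
\]
for any $\alpha\in A^*(\M_{g,n}^\ps)$. This is precisely the manipulation already carried out informally in Subsection~\ref{sec:psilambda}.

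Second, I would apply this with $\alpha=F(\lambda_1,\dots,\lambda_g,\psi_1,\dots,\psi_n)$. Because $\T^*$ is a ring homomorphism on Chow, it commutes with the polynomial $F$, so
\[
\T^*(F(\lambda_\bullet,\psi_\bullet))=F(\T^*\lambda_1,\dots,\T^*\lambda_g,\T^*\psi_1,\dots,\T^*\psi_n).
\]

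Third, I would substitute the formulas of Theorem~\ref{thm:comparingclasses}, namely $\T^*(\psi_i)=\psi_i$ and
\[
\T^*(\lambda_j)=\lambda_j+\sum_{i=1}^{j}\frac{1}{i!}\G^i_*\bigl(p_0^*(\lambda_{j-i})\bigr)=\hat\lambda_j,
\]
into this expression. Combining the three steps yields exactly the claimed identity. The only point requiring any care is the first one: confirming that $\T$ really is birational (it is an isomorphism on the open locus of curves with no elliptic tails, which is dense in $\M_{g,n}$ for all pseudostable indices) and invoking properness to legitimize pushing forward along $\T$ before pushing forward to a point. Given the setup in Section~\ref{sec:comparingclasses}, this is essentially bookkeeping rather than a genuine obstacle.
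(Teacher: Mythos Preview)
Your proposal is correct and mirrors the paper's own treatment: the paper states Theorem~\ref{thm:translation} as an immediate consequence of Theorem~\ref{thm:comparingclasses} together with the change-of-variables identity $\int_{\M_{g,n}^\ps}\alpha=\int_{\M_{g,n}}\T^*\alpha$ and the ring-homomorphism property of $\T^*$ already recorded in Subsection~\ref{sec:psilambda}. Your extra sentence justifying $\T_*([\M_{g,n}])=[\M_{g,n}^\ps]$ via birationality is a welcome bit of bookkeeping that the paper leaves implicit.
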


We now employ Theorem~\ref{thm:translation} to carry out the first computations of pseudostable Hodge integrals. 

\subsection{Linear Hodge integrals}

It follows immediately from Theorem~\ref{thm:translation} that any psuedostable integral of $\psi$ classes alone is equal to the corresponding stable integral of $\psi$ classes. Once $\lambda$ classes make an appearance, however, you wouldn't expect the integrals to remain unchanged. However, if the expression is linear in the $\lambda$ classes, it turns out that the pseudostable Hodge integral is exactly the same as the corresponding stable Hodge integral.

\begin{proposition}\label{thm:linearhodge}
For any $j=1,\dots,g$ and any polynomial $F\in\Z[x_1,\dots,x_n]$,
\[
\int_{\M_{g,n}^\ps}\lambda_jF(\psi_1,\dots,\psi_n)=\int_{\M_{g,n}}\lambda_jF(\psi_1,\dots,\psi_n).
\]
\end{proposition}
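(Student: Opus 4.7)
The plan is to apply Theorem~\ref{thm:translation} and then show that every correction term in $\hat\lambda_j$ contributes nothing when integrated against $F(\psi_1,\dots,\psi_n)$. Concretely, Theorem~\ref{thm:translation} rewrites the left-hand side as $\int_{\M_{g,n}} \hat\lambda_j F(\psi_1,\dots,\psi_n)$, so after peeling off the $\lambda_j$ piece of $\hat\lambda_j$, what remains is to prove the vanishing
\[
\int_{\M_{g,n}} \G^i_*\bigl(p_0^*(\lambda_{j-i})\bigr) \cdot F(\psi_1,\dots,\psi_n) = 0
\]
for each $i = 1,\dots,j$.

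To establish this vanishing, I would apply the projection formula to transfer the integral to the boundary stratum $\M_{g-i,n+i} \times \M_{1,1}^{\times i}$. The key observation is that the marked points $1,\dots,n$ are untouched by the gluing map $\G^i$, so $(\G^i)^*(\psi_k) = p_0^*(\psi_k)$ for each $k = 1,\dots,n$. Combined with $p_0^*(\lambda_{j-i})$, the integrand on the boundary becomes $p_0^*\bigl(\lambda_{j-i} F(\psi_1,\dots,\psi_n)\bigr)$, entirely pulled back from the first factor. The integral then vanishes for a dimension reason: the map $p_0$ has positive-dimensional fiber $\M_{1,1}^{\times i}$ of dimension $i \geq 1$, so $(p_0)_*\bigl(p_0^*\alpha\bigr) = \alpha \cdot (p_0)_*(1) = 0$. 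Equivalently, for $\lambda_j F(\psi_1,\dots,\psi_n)$ to sit in top degree on $\M_{g,n}$, the class $\lambda_{j-i} F(\psi_1,\dots,\psi_n)$ would need degree $3g-3+n-i$ on $\M_{g-i,n+i}$, exceeding $\dim \M_{g-i,n+i} = 3g-3+n-2i$ by $i$, and hence vanishes.

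I do not anticipate a real obstacle; the only subtle point worth verifying is the pullback compatibility $(\G^i)^*(\psi_k) = p_0^*(\psi_k)$, which is standard for $\psi$ classes at marked points that play no role in the gluing. With that in hand, the projection formula together with the dimension count above closes the argument, and in particular explains conceptually why all the $\G^i$-correction terms in $\hat\lambda_j$ are invisible to integrands that are linear in $\lambda$ classes.
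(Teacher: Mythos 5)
Your proposal is correct and follows essentially the same route as the paper's proof: apply Theorem~\ref{thm:translation}, use $(\G^i)^*(\psi_k)=p_0^*(\psi_k)$ together with the projection formula to push the integrand onto the boundary stratum, and conclude by the dimension count that $\lambda_{j-i}F(\psi_1,\dots,\psi_n)$ has degree $3g-3+n-i$ on $\M_{g-i,n+i}$, which exceeds $\dim\M_{g-i,n+i}=3g-3+n-2i$. The only cosmetic difference is your alternative phrasing via $(p_0)_*(p_0^*\alpha)=0$, which is the same dimension argument in disguise.
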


\begin{proof}
We can assume that $F$ is homogenous of degree $3g-3+n-j$, as both integrals vanish for homogeneous polynomials of any other degree. By Theorem~\ref{thm:translation}, we have
\[
 \int_{\M_{g,n}^\ps}\lambda_jF(\psi_1,\dots,\psi_n)=\int_{\M_{g,n}}\lambda_jF(\psi_1,\dots,\psi_n)+\sum_{i=1}^j\int_{\M_{g,n}}\frac{\G^i_*\big(p_0^*(\lambda_{j-i}\big)}{i!}F(\psi_1,\dots,\psi_n).
\]
Noting that $(\G^i)^*(F(\psi_1,\dots,\psi_n))=p_0^*(F(\psi_1,\dots,\psi_n))$, it follows from the projection formula that 
\[
\G^i_*\big(p_0^*(\lambda_{j-i})\big)F(\psi_1,\dots,\psi_n)=\G^i_*\big(p_0^*(\lambda_{j-i}F(\psi_1,\dots,\psi_n))\big).
\]
However,
\[
\lambda_{j-i}F(\psi_1,\dots,\psi_n)=0\in A^*(\M_{g-i,n+i})
\]
by dimension reasons---the class has degree $3g-3+n-i$ while the dimension of the moduli space is $3g-3+n-2i$.
\end{proof}

\begin{remark}
Proposition~\ref{thm:linearhodge} was first discovered by the second named author as part of his Master's thesis. At the time, we were very surprised to discover this result because we had not yet found an explicit formula for $\T^*(\lambda_j)$---we were working directly with the Chern character comparison of Equation~\eqref{eq:cherncharactercomparison} and the cancellation of the correction terms in the linear Hodge integrals seemed magical at the time. The discovery of Proposition~\ref{thm:linearhodge} then motivated the quest to find a more concise formula for $\T^*(\lambda_j)$, leading to Theorem~\ref{thm:comparingclasses}.
\end{remark}

\begin{remark}\label{rmk:ELSV}
We expect that Proposition~\ref{thm:linearhodge} also admits a more conceptual proof using ideas related to the ELSV formula. The ELSV formula, named after Ekedahl, Lando, Shapiro, and Vainshtein \cite{ELSV}, is a formula relating Hurwitz numbers to linear Hodge integrals:
\begin{equation}\label{eq:ELSV}
h^m_{\mu_1,\dots,\mu_\ell}=m!\prod_{i=1}^\ell \frac{\mu_i^{\mu_i+1}}{\mu_i!}\int_{\mathcal{\overline{M}}_{g,\ell}}\frac{1-\lambda_1+\lambda_2-\cdots+(-1)^g\lambda_g}{(1-\mu_1\psi_1)\cdots(1-\mu_\ell\psi_\ell)}
\end{equation}
where $h^m_{\mu_1,\dots,m_\ell}$ counts the number of ways to factor a permutation $\mu\in S_n$ of cycle type $(\mu_1,\dots,\mu_\ell)$ into a product of $m$ transpositions that act transitively on $S_n$. By varying $\mu_1,\dots,\mu_\ell$, the ELSV formula determines all linear Hodge integrals.

There are several proofs of the ELSV formula, but the one that is most pertinent to this discussion is the one given by Graber and Vakil \cite{GP}. In their proof, they consider the moduli space of relative stable maps to $\mathbb{P}^1$ and interpret Hurwitz numbers in terms of the degree of the branch morphism. They then show that the ELSV formula \eqref{eq:ELSV} arises upon applying the virtual localization formula to compute the degree of the branch morphism. We expect that their arguments carry over verbatim to the psuedostable setting, which would tell us that both stable and pseudostable linear Hodge integrals are determined by the same relation \eqref{eq:ELSV}, so they must be equal. Such a proof of Proposition~\ref{thm:linearhodge} would bypass the explicit comparison of Theorem \ref{thm:comparingclasses}, but it would require one to undertake the technical work of constructing moduli spaces of relative pseudostable maps and proving the corresponding virtual localization theorem, which we view as a worthwhile research endeavor, but not one that we will pursue here.
\end{remark}

\subsection{Mumford's formula}

Given Proposition~\ref{thm:linearhodge}, one might optimistically hope that all pseudostable Hodge integrals are equal to their stable counterparts. As it turns out, this is far too optimistic. Even for Hodge integrals with a product of two lambda classes, the two types of Hodge integrals differ in general. We verify this below, while at the same time exhibiting that Mumford's formula fails in the pseudostable setting.

We recall that ``Mumford's formula'', proved in \cite{Mumford}, says that
\[
(1+\lambda_1+\cdots+\lambda_g)(1-\lambda_1+\cdots+(-1)^g\lambda_g)=1\in A^*(\M_{g,n}).
\]
Mumford's formula is a very useful tool in Gromov--Witten theory. Looking at the degree-two part of the identity, we see that $2\lambda_2-\lambda_1^2=0\in A^2(\M_{g,n})$. Therefore, in order to show that Mumford's formula fails in the pseudostable setting, it suffices to prove that
\[
\int_{\M_{g,n}^{\ps}}(2\lambda_2-\lambda_1^2)\psi_1^{3g-4}\neq 0.
\]
We prove this for all $g\geq 2$ and $n\geq 1$ in the next result.

\begin{proposition}\label{thm:mumford}
For all $n\geq 1$, we have
\[
\sum_{g\geq 2}t^g\int_{\M_{g,n}^{\ps}}(2\lambda_2-\lambda_1^2)\psi_1^{3g-5+n}=-\frac{t}{24}(e^{\frac{t}{24}}-1).
\]
In particular,
\begin{itemize}
\item for every $g\geq 2$ and $n\geq 1$, Mumford's formula does not hold in $A^*(\M_{g,n}^\ps)$, and
\item pseudostable Hodge integrals are not always equal to their stable counterparts.
\end{itemize}
\end{proposition}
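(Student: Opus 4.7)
My plan is to apply Theorem~\ref{thm:translation} with $F = (2x_2 - x_1^2)y_1^{3g-5+n}$ and then systematically evaluate the resulting integral on $\M_{g,n}$. Expanding $\hat\lambda_1 = \lambda_1 + \G_*(1)$ and $\hat\lambda_2 = \lambda_2 + \G_*(p_0^*\lambda_1) + \tfrac{1}{2}\G_*^2(1)$ and then using Mumford's identity $2\lambda_2 - \lambda_1^2 = 0$ on $\M_{g,n}$ to eliminate the ``main term,'' the pseudostable integral reduces to
\[
\int_{\M_{g,n}}\bigl(2\G_*(p_0^*\lambda_1) + \G_*^2(1) - 2\lambda_1 \G_*(1) - (\G_*(1))^2\bigr)\psi_1^{3g-5+n}.
\]

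Next I would evaluate each of the four terms using the projection formula together with the identities $\G^*\psi_1 = p_0^*\psi_1$ and $\G^*\bE = p_0^*\bE \oplus p_1^*\bE$ that are already exploited in Theorem~\ref{thm:comparingclasses}. The first two terms vanish by dimension: pulling back produces $p_0^*(\lambda_1\psi_1^{3g-5+n})$ on $\M_{g-1,n+1}\times\M_{1,1}$, where the $\M_{g-1,n+1}$-factor has degree $3g-4+n$ exceeding its dimension $3g-5+n$, and similarly $(\G^2)^*\psi_1^{3g-5+n}$ pulls back to a class of degree $3g-5+n$ on a copy of $\M_{g-2,n+2}$ of dimension only $3g-7+n$. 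For the third term, the splitting gives $\G^*\lambda_1 = p_0^*\lambda_1 + p_1^*\lambda_1$; the $p_0^*\lambda_1$ piece dies by the same dimension count, and the $p_1^*\lambda_1$ piece separates via K\"unneth into $\tfrac{1}{24}\int_{\M_{g-1,n+1}}\psi_1^{3g-5+n}$. For $(\G_*(1))^2$ I would invoke the Graber--Pandharipande self-intersection formula used in Theorem~\ref{thm:comparingclasses} to write $(\G_*(1))^2 = \G_*(-\psi_\bullet - \psi_\star) + \G_*^2(1)$; the $\G_*^2(1)$-summand vanishes by dimension as before, while integrating the other term against $\psi_1^{3g-5+n}$ kills the $\psi_\star$-piece (since $\int_{\M_{1,1}}1 = 0$) and yields $-\tfrac{1}{24}\int_{\M_{g-1,n+1}}\psi_1^{3g-5+n}$ from the $\psi_\bullet$-piece. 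Assembling these contributions produces the clean identity
\[
\int_{\M_{g,n}^\ps}(2\lambda_2 - \lambda_1^2)\psi_1^{3g-5+n} = -\frac{1}{24}\int_{\M_{g-1,n+1}}\psi_1^{3g-5+n}.
\]

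To close, I would iterate the string equation $n$ times to reduce the right-hand side to $\int_{\M_{g-1,1}}\psi_1^{3g-5}$, then apply the standard Witten--Kontsevich evaluation $\int_{\M_{g',1}}\psi_1^{3g'-2} = \tfrac{1}{24^{g'}g'!}$ with $g' = g-1$ to obtain the coefficient $-\tfrac{1}{24^g(g-1)!}$ for $t^g$; re-indexing by $m=g-1$ recognizes the generating series as $-\tfrac{t}{24}(e^{t/24}-1)$. Both bulleted consequences are then immediate, since the displayed integral is nonzero for every $g\geq 2$ and $n \geq 1$, whereas its stable counterpart $\int_{\M_{g,n}}(2\lambda_2-\lambda_1^2)\psi_1^{3g-5+n}$ vanishes by Mumford. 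The main obstacle I anticipate is careful bookkeeping for $(\G_*(1))^2$, where one must track the multiplicity of the transverse $\G^2$-contribution in the Graber--Pandharipande formula; fortunately that summand is annihilated by $\psi_1^{3g-5+n}$ on dimension grounds, so any combinatorial ambiguity there does not affect the final value.
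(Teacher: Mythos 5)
Your proposal is correct and follows essentially the same route as the paper: both apply Theorem~\ref{thm:translation}, use Mumford's identity on $\M_{g,n}$ to cancel the main term, evaluate the correction terms via the Graber--Pandharipande self-intersection formula, the splitting of the Hodge bundle, and dimension counts, and then finish with the string equation and the Witten--Kontsevich evaluation $\int_{\M_{g-1,1}}\psi_1^{3g-5}=\tfrac{1}{24^{g-1}(g-1)!}$. The only cosmetic difference is that the paper first simplifies the class $2\hat\lambda_2-\hat\lambda_1^2$ to $\G^1_*(\psi_\star)-\G^1_*(\psi_\bullet)$ before integrating, whereas you integrate the four correction terms separately; the bookkeeping and the final coefficient $-\tfrac{1}{24^g(g-1)!}$ agree.
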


\begin{proof}
By Theorem~\ref{thm:translation}, we see that
\[
\int_{\M_{g,n}^{\ps}}(2\lambda_2-\lambda_1^2)\psi_1^{3g-5+n}=\int_{\M_{g,n}}(2\hat\lambda_2-\hat\lambda_1^2)\psi_1^{3g-5+n}
\]
where
\[
\hat\lambda_1=\lambda_1+\G^1_*(1)\;\;\;\text{ and }\;\;\;\hat\lambda_2=\lambda_2+\G_*^1(p_0^*(\lambda_1))+\frac{1}{2}\G_*^2(1).
\]
Notice that
\begin{align*}
\hat\lambda_1^2&=\lambda_1^2+2\lambda_1\G^1_*(1)+\G_*^1(-\psi_{\star}-\psi_{\bullet})+\G^2_*(1)\\
&=\lambda_1^2+2\G^1_*(p_0^*(\lambda_1))+\G^1_*(\psi_\bullet)-\G^1_*(\psi_\star)+\G^2_*(1).
\end{align*}
Therefore, using the fact that $2\lambda_2-\lambda_1^2=0$ in $A^*(\M_{g,n})$, the integrand can be simplified as
\[
(-\G^1_*(\psi_{\bullet})+\G^1_*(\psi_{\star}))\psi_1^{3g-5+n}=-\G^1_*(\psi_{\bullet}p_0^*(\psi_1^{3g-5+n}))+\G^1_*(\psi_{\star}p_0^*(\psi_1^{3g-5+n})).
\]
By dimension reasons, only the first summand contributes to the integral, and we have
\begin{align*}
\int_{\M_{g,n}^{\ps}}(2\lambda_2-\lambda_1^2)\psi_1^{3g-5+n}&=-\int_{\M_{g,n}}\G^1_*(p_1^*(\psi_1)p_0^*(\psi_1^{3g-5+n}))\\
&=-\int_{\M_{1,1}}\psi_1\int_{\M_{g-1,n+1}}\psi_1^{3g-5+n}\\
&=-\int_{\M_{1,1}}\psi_1\int_{\M_{g-1,1}}\psi_1^{3g-5},
\end{align*}
where the last equality was an application of the string equation. Finally, using the Witten-Kontsevich Theorem \cite{Witten,Kontsevich}, we have the following formula for one-pointed psi class intersection numbers (see, for example, Section 3.5.5 of \cite{Kock} for a derivation):
\[
\int_{\M_{g,1}}\psi_1^{3g-2}=\frac{1}{24^gg!}.
\]
Thus, we conclude that
\[
\int_{\M_{g,n}^{\ps}}(2\lambda_2-\lambda_1^2)\psi_1^{3g-5+n}=-\int_{\M_{1,1}}\psi_1\int_{\M_{g-1,1}}\psi_1^{3g-5}=\frac{-1}{24^g(g-1)!},
\]
and the theorem follows from the expression of the exponential function as a power series.
\end{proof}

\bibliographystyle{alpha}
\bibliography{references}

\begin{thebibliography}{RSPW19}

\bibitem[BF97]{BehrendFantechi}
K.~Behrend and B.~Fantechi.
\newblock The intrinsic normal cone.
\newblock {\em Invent. Math.}, 128(1):45--88, 1997.

\bibitem[Com74]{Comtet}
Louis Comtet.
\newblock {\em Advanced combinatorics}.
\newblock D. Reidel Publishing Co., Dordrecht, enlarged edition, 1974.
\newblock The art of finite and infinite expansions.

\bibitem[DM69]{DeligneMumford}
P.~Deligne and D.~Mumford.
\newblock The irreducibility of the space of curves of given genus.
\newblock {\em Inst. Hautes \'{E}tudes Sci. Publ. Math.}, (36):75--109, 1969.

\bibitem[DSvZ20]{DSvZ}
V.~Delecroix, J.~Schmitt, and J.~van Zelm.
\newblock admcycles -- a sage package for calculations in the tautological ring
  of the moduli space of stable curves.
\newblock 2020.

\bibitem[ELSV01]{ELSV}
T.~Ekedahl, S.~Lando, M.~Shapiro, and A.~Vainshtein.
\newblock Hurwitz numbers and intersections on moduli spaces of curves.
\newblock {\em Invent. Math.}, 146(2):297--327, 2001.

\bibitem[FP00]{FP}
C.~Faber and R.~Pandharipande.
\newblock Hodge integrals and {G}romov-{W}itten theory.
\newblock {\em Invent. Math.}, 139(1):173--199, 2000.

\bibitem[FS13]{FS}
M.~Fedorchuk and D.~I. Smyth.
\newblock Alternate compactifications of moduli spaces of curves.
\newblock In {\em Handbook of moduli. {V}ol. {I}}, volume~24 of {\em Adv. Lect.
  Math. (ALM)}, pages 331--413. Int. Press, Somerville, MA, 2013.

\bibitem[GP99]{GraberPandharipande}
T.~Graber and R.~Pandharipande.
\newblock Localization of virtual classes.
\newblock {\em Invent. Math.}, 135(2):487--518, 1999.

\bibitem[GV03]{GP}
T.~Graber and R.~Vakil.
\newblock Hodge integrals and {H}urwitz numbers via virtual localization.
\newblock {\em Compositio Math.}, 135(1):25--36, 2003.

\bibitem[HH09]{HH}
B.~Hassett and D.~Hyeon.
\newblock Log canonical models for the moduli space of curves: the first
  divisorial contraction.
\newblock {\em Trans. Amer. Math. Soc.}, 361(8):4471--4489, 2009.

\bibitem[Koc01]{Kock}
J.~Kock.
\newblock Notes on psi classes.
\newblock 2001.

\bibitem[Kon92]{Kontsevich}
M.~Kontsevich.
\newblock Intersection theory on the moduli space of curves and the matrix
  {A}iry function.
\newblock {\em Comm. Math. Phys.}, 147(1):1--23, 1992.

\bibitem[Li02]{Li}
J.~Li.
\newblock A degeneration formula of {GW}-invariants.
\newblock {\em J. Differential Geom.}, 60(2):199--293, 2002.

\bibitem[Mum83]{Mumford}
D.~Mumford.
\newblock Towards an enumerative geometry of the moduli space of curves.
\newblock In {\em Arithmetic and geometry, {V}ol. {II}}, volume~36 of {\em
  Progr. Math.}, pages 271--328. Birkh\"{a}user Boston, Boston, MA, 1983.

\bibitem[PT14]{PT}
R.~Pandharipande and R.~P. Thomas.
\newblock 13/2 ways of counting curves.
\newblock In {\em Moduli spaces}, volume 411 of {\em London Math. Soc. Lecture
  Note Ser.}, pages 282--333. Cambridge Univ. Press, Cambridge, 2014.

\bibitem[RSPW19]{RSPW}
D.~Ranganathan, K.~Santos-Parker, and J.~Wise.
\newblock Moduli of stable maps in genus one and logarithmic geometry, {I}.
\newblock {\em Geom. Topol.}, 23(7):3315--3366, 2019.

\bibitem[Sch91]{Schubert}
D.~Schubert.
\newblock A new compactification of the moduli space of curves.
\newblock {\em Compositio Math.}, 78(3):297--313, 1991.

\bibitem[Wit91]{Witten}
E.~Witten.
\newblock Two-dimensional gravity and intersection theory on moduli space.
\newblock In {\em Surveys in differential geometry ({C}ambridge, {MA}, 1990)},
  pages 243--310. Lehigh Univ., Bethlehem, PA, 1991.

\bibitem[Yan10]{Yang}
S.~Yang.
\newblock Intersection numbers on {$\M_{g,n}$}.
\newblock {\em J. Softw. Algebra Geom.}, 2:1--5, 2010.

\end{thebibliography}

\end{document}